\setlist[enumerate]{label=\roman*),itemsep=0pt}
\newtheorem{thm}{Theorem}
\newtheorem{lem}[thm]{Lemma}
\newtheorem{obs}[thm]{Observation}
\newtheorem{coro}[thm]{Corollary}
\newtheorem{prop}[thm]{Proposition}
\theoremstyle{definition}
\newtheorem{df}[thm]{Definition}
\theoremstyle{remark}
\newtheorem{ex}[thm]{Example}
\newcommand{\PL}[1]{\ensuremath{\mathrm{PL}_{\boldsymbol{#1}}}}
\newcommand{\bu}{\ensuremath{\boldsymbol{u}}}
\newcommand{\pall}[1]{\ensuremath{|#1|_{\mathrm{pal}}}}
\newcommand{\pallc}[1]{\ensuremath{|#1|_{\mathrm{cpal}}}}
\newcommand{\N}{\mathbb{N}}
\newcommand{\Lc}{\mathcal{L}}
\newcommand{\Pc}{\mathcal{P}}
\begin{document}

\begin{frontmatter}

\title{Palindromic length of words and morphisms in class $\mathcal{P}$}

\author[fjfi]{Petr Ambro\v{z}\corref{cor1}}
\ead{petr.ambroz@fjfi.cvut.cz}

\author[fjfi]{Ond\v{r}ej Kadlec}
\ead{kadleond@fjfi.cvut.cz}

\author[fjfi]{Zuzana Mas\'akov\'a}
\ead{zuzana.masakova@fjfi.cvut.cz}

\author[fjfi]{Edita Pelantov\'a}
\ead{edita.pelantova@fjfi.cvut.cz}

\cortext[cor1]{Corresponding author}

\address[fjfi]{FNSPE, Czech Technical University in Prague, Trojanova 13, 120 00 Praha 2, Czech~Republic}

\begin{abstract}
We study the palindromic length of factors of infinite words fixed by morphisms of the so-called class $\Pc$
introduced by Hof, Knill and Simon. We show that it grows at most logarithmically with the length of the factor.
For the Fibonacci word and the Thue-Morse word we provide estimates on the constants of the growth.
We also construct an infinite word rich in palindromes for which the palindromic length grows as $\sqrt{n}$.
\end{abstract}

\begin{keyword}
palindromic length; class $\Pc$; palindromic richness.
\end{keyword}

\end{frontmatter}


\section{Introduction}

In this paper we show a connection between two conjectures concerning palindromes in languages of infinite
words. A palindrome is a word which reads the same forward as backward, such as eye or kayak. An impulse
to formulate the first conjecture comes from paper by Hof, Knill and Simon~\cite{hof-knill-simon-cmp-174},
where they studied infinite words generated by primitive morphisms.
They defined a certain class of
morphisms, called class $\Pc$, by requiring that the image of any letter $a$ in the alphabet is in the form
$a\mapsto p q_a$, where $p$ and $q_a$ are palindromes, $p$ being common to every $a$.
They showed that fixed points of morphisms in this class contain an infinite
number of palindromes. They asked whether any palindromic fixed point of a primitive
substitution arises using such a morphism.
This question was eventually turned into a conjecture (called later the
HKS conjecture), however,
due to a certain vagueness of the original question several versions of this conjecture have been
considered (for details see Introduction in~\cite{labbe-pelantova-ejc-51}). Validity of HKS conjecture was
proved for binary words~\cite{tan-tcs-389} and for words fixed by a morphism of certain
types~\cite{labbe-pelantova-ejc-51,masakova-pelantova-starosta-ejc-62}.

In 2013, Frid, Puzynina and Zamboni~\cite{frid-puzynina-zamboni-aam-50} introduced the palindromic length
of a finite word $w$, denoted by $\pall{w}$, as the minimal number of palindromes whose concatenation
is equal to $w$. They conjectured that if there is a constant $K$ such that the palindromic length
of every factor in an infinite word $\bu$ is bounded by $K$, then $\bu$ is eventually periodic.
Formally, defining for a given infinite word $\bu$ the function $\PL{u}:\N\to\N$ by
\[
\PL{u}(n) := \max \{ \pall{w} : \text{$w$ is a factor of length $n$ in \bu} \},
\]
the conjecture is that if $\bu$ is not eventually periodic, then $\limsup_{n\to\infty} \PL{u}(n) =+\infty$.
The authors of~\cite{frid-puzynina-zamboni-aam-50} proved the conjecture for infinite words which
do not contain an $r$-power for any positive integer $r$. In particular, the conjecture is true for any
aperiodic fixed point of a primitive morphism, as such fixed points have bounded powers~\cite{mosse-tcs-99}.
Later, Frid~\cite{frid-ejc-71} showed that
Sturmian words have unbounded palindromic length even if they contain unbounded powers.
Palindromic length of Sturmian words is studied also in~\cite{ambroz-pelantova-preprint}.
It is shown that $\PL{u}$ can grow arbitrarily slowly.
For other infinite words besides Sturmian words and bounded-repetition words the conjecture of
$\limsup_{n\to\infty}\PL{u}(n)$ $= +\infty$ remains open.

\medskip

We study palindromic length of factors of fixed points of primitive morphisms. Here, as we have stated above,
the palindromic length in unbounded, whenever the fixed point is not eventually periodic.
The main results of this contribution are formulated as Proposition~\ref{prop:upper_est_on_pall} and
Theorem~\ref{thm:hlavni}.
We prove that if the HKS conjecture is valid then for any primitive morphism $\varphi$
there is a constant $K>0$ such that the palindromic length of every factor $w$ in the language of $\varphi$
is less than or equal to $K\ln|w|$.
We also provide a method of estimating the constant $K$.

For the case of the Fibonacci word $\boldsymbol{f}$ our computations suggest that
$\limsup_{n\to\infty}\frac{\PL{f}(n)}{\ln n}\leq \frac{2}{{3}\ln \tau}$,
where $\tau=\frac12(1+\sqrt{5})$ is the golden ratio. We give an upper bound on $\limsup_{n\to\infty}\frac{\PL{u}(n)}{\ln n}$
also for the Thue-Morse word. The estimates are given in Section~\ref{sec:FibThu}.
Let us mention that a lower bound on $\limsup\PL{u}$ is not known
even for the Fibonacci word. Frid~\cite{frid-numeration-2018} conjectures that
$\limsup_{n\to\infty}\frac{\PL{f}(n)}{\ln n} \geq \frac{1}{3\ln \tau}$.

The Fibonacci word $\boldsymbol{f}$ belongs among the so-called rich words (or full words)
introduced in~\cite{droubay-justin-pirillo-tcs-255,brlek-hamel-nivat-reutenauer-ijfcs-15}. An infinite word is rich
if each of its finite factors is rich, i.e., contains as many palindromes as possible. Intuitively, the palindromic length of
of such words should grow slowly. We demonstrate that this is not necessarily the case.
In Section~\ref{sec:rich}, we use finite rich words introduced by Guo, Shallit and Shur~\cite{guo-shallit-shur-ipl-116}
to construct an infinite word $\bu$ for which $\PL{u}(n)\geq c\sqrt{n}$ for some positive constant $c$.

Section~\ref{sec:antipal} is devoted to further computer experiments on the Thue-Morse word $\boldsymbol{t}$ whose
language contains besides palindromes also infinitely many antipalindromes. We introduce the combined pal-antipal length
and compare its growth to the growth of $\PL{t}$.

In the final section we formulate several open questions concerning factorization to palindromes and/or antipalindromes.


\section{Preliminaries}

Let $A$ be a finite set called \emph{alphabet}, its elements are called \emph{letters}.
A \emph{word} $w=w_1\cdots w_n$ (over $A$) is a finite sequence of elements in $A$, its \emph{length}
(the number of its elements) is denoted by $|w|=n$. The notation $|w|_a$ is used for the number of
occurrences of the letter $a$ in $w$. The \emph{empty word} -- unique word of length zero --
is denoted by $\varepsilon$. The \emph{concatenation} of words $v=v_1\cdots v_k$ and $w=w_1\cdots w_l$ is
$vw=v\cdot w=v_1\cdots v_kw_1\cdots w_l$. The set of all finite words over $A$ equipped with the operation
concatenation of words is a free monoid, denoted by $A^*$.

For a word $w=w_1\cdots w_n$ we define its mirror image as $\overleftarrow{w}=w_n\cdots w_1$.
A word $w$ is called \emph{palindrome} if $w=\overleftarrow{w}$. The \emph{palindromic length} of
a word $w$, denoted by $\pall{w}$, is the smallest number $K$ of palindromes $p_1,\ldots,p_K$ such that
$w=p_1\cdots p_K$, i.e., the minimal number of palindromes whose concatenation is equal to $w$. For convenience,
we define $\pall{\varepsilon}=0$.

An infinite sequence of letters $\bu=(u_i)_{i\geq 1}$ in $A$ is called \emph{infinite word}. The set of all
infinite words over $A$ is denoted $A^{\N}$. The word $\bu\in A^{\N}$ is said to be \emph{eventually periodic}
if it is of the form $\bu=vz^{\omega}$, where $v,z\in A^*$, $z\neq\varepsilon$ and $z^{\omega}=zzz\cdots$.

A \emph{factor} of a (finite of infinite) word $w$ is a finite word $v$ such that $w=w_1vw_2$ for some
words $w_1,w_2$. If $w_1=\varepsilon$ then $v$ is called a \emph{prefix} of $w$, if $w_2=\varepsilon$ then
$v$ is called a \emph{suffix} of $w$. The set of all factors of an infinite word $\bu$, called the
\emph{language} of $\bu$, is denoted by $\Lc(\bu)$. Let $v_p$ be a prefix of a word $v$, that is,
there is a word $x$ such that $v=v_px$. Then we define $v_p^{-1}v = v_p^{-1}\cdot v = x$.
Similarly, let $v_s$ be a suffix of a word $v$, that is,
there is a word $y$ such that $v=yv_s$. Then we define $vv_s^{-1} = v\cdot v_s^{-1} = y$.

Let $w$ be a finite word over the alphabet $A=\{a_1,\ldots,a_r\}$. Then the \emph{Parikh vector} of $w$
is the vector, denoted $\vec{V}(w)$, whose $i$-th element is the number of occurrences  of
$a_i$ in $w$, i.e.,
\[
\vec{V}(w)=\begin{pmatrix}|w|_{a_1} \\ \vdots \\ |w|_{a_r}\end{pmatrix}.
\]
Obviously, $|w|=\boldsymbol{1}\cdot\vec{V}(w)$, where $\boldsymbol{1}=(1\ 1 \cdots 1)$.

A \emph{morphism} of the free monoid $A^*$ is a map $\varphi:A^*\to A^*$ such that
$\varphi(vw)=\varphi(v)\varphi(w)$ for all $v,w\in A^*$. A morphism of $A^*$, where $A=\{a_1,\ldots,a_r\}$,
is called \emph{primitive} if there is a constant $K$ such that $\varphi^K(a_i)$ contains $a_j$ for
every $i,j\in\{1,\ldots,r\}$. The action of the morphism $\varphi$ is naturally extended to infinite words by concatenation,
in particular, we have
\[
\varphi(u_0u_1u_2\cdots) = \varphi(u_0)\varphi(u_1)\varphi(u_2)\cdots
\]
An infinite word $\bu$ is called a \emph{fixed point} of the morphism $\varphi$ if $\bu=\varphi(\bu)$.
Clearly, a morphism $\psi$ can have several different fixed points, however, if $\psi$ is primitive then
all its fixed points have the same language, denoted $\Lc(\psi)$.

Let $A=\{a_1,\ldots,a_r\}$ and let $\psi$ be a morphism of $A^*$. The \emph{incidence matrix} of $\psi$ is the
$r\times r$ matrix $\boldsymbol{M}_{\!\psi}$ given by $[\boldsymbol{M}_{\!\psi}]_{ij}=|\psi(a_j)|_{a_i}$. The
incidence matrix of $\psi$ can be used to compute the Parikh vector of the image of a word $w$ under $\psi$ by
\begin{equation}\label{eq:parikh_vector_of_image_via_incid_matrix}
\vec{V}(\psi(w)) = \boldsymbol{M}_{\!\psi}\cdot\vec{V}(w).
\end{equation}


\section{Morphisms in class $\Pc$}

\begin{df}
A primitive morphism $\psi:A^*\to A^*$ belongs to class $\Pc$ if there is a palindrome $p\in A^*$
such that for each $a\in A$
\begin{equation}\label{eq:def_class_P}
\psi(a) = pq_a, \qquad \text{where $q_a\in A^*$ is a palindrome.}
\end{equation}
\end{df}

\begin{ex}\label{ex:fibmorphism}
The Fibonacci morphism $\varphi_F: a\mapsto ab, b\mapsto a$ belongs to  class $\Pc$;
Equation~(\ref{eq:def_class_P}) is fulfilled for $p=a, q_a=b, q_b=\varepsilon$.
\end{ex}

\begin{ex}\label{ex:tmmorphisms}
The Thue-Morse morphism $\varphi_{\text{TM}}: a\mapsto ab, b\mapsto ba$ does not belong to class $\Pc$,
however, its square $\varphi_{\text{TM}}^2: a\mapsto abba, b\mapsto baab$ does ($p=\varepsilon, q_a=abba, q_b=baab$).
\end{ex}

The following simple observation is due to Hof, Knill and Simon~\cite{hof-knill-simon-cmp-174}.

\begin{obs}\label{obs:image_under_classP_morphism}
Let $\psi$ be a primitive morphism in the form~(\ref{eq:def_class_P}) and let $\bu$ be a fixed point of $\psi$.
Then
\begin{enumerate}
\item
  if $w\in\Lc(\psi)$ then $\psi(w)p\in\Lc(\psi)$,
\item
  if $w$ is a palindrome then $\psi(w)p$ is a palindrome.
\end{enumerate}
\end{obs}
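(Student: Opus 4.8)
The plan is to prove the two claims in the natural order, using the defining relation $\psi(a) = pq_a$ with $p$ and all $q_a$ palindromes. The key computational tool is to understand how $\psi$ interacts with the mirror-image operation. First I would establish the basic identity that for any word $w$, the image $\psi(w)p$ has a clean structure: since $\psi(w_1\cdots w_n) = pq_{w_1}pq_{w_2}\cdots pq_{w_n}$, appending a trailing $p$ gives $\psi(w)p = pq_{w_1}pq_{w_2}\cdots pq_{w_n}p$. This symmetric ``sandwiching'' by $p$ is what makes the palindrome claim work.

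For part (i), the containment $\psi(w)p \in \Lc(\psi)$, I would argue as follows. Since $\bu$ is a fixed point, every factor $w$ of $\bu$ occurs inside $\bu$, say $\bu = x\,w\,y\cdots$; moreover the letter immediately following any factor is again some letter $c \in A$ whose image begins with $p$. Applying $\psi$ to an occurrence of $wc$ (where $c$ is the next letter) yields $\psi(w)\psi(c) = \psi(w)\,p\,q_c$ as a factor of $\psi(\bu) = \bu$, and hence $\psi(w)p$ is a prefix of a factor of $\bu$, so $\psi(w)p \in \Lc(\bu) = \Lc(\psi)$. The only point needing care is that $w$ is genuinely followed by some letter in $\bu$, which holds because $\bu$ is infinite and $w$ occurs in it.

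For part (ii), suppose $w$ is a palindrome, $w = \overleftarrow{w}$; I want to show $\psi(w)p$ is a palindrome. Using the sandwiched form $\psi(w)p = pq_{w_1}pq_{w_2}p\cdots q_{w_n}p$, I would compute its mirror image. Reversing a concatenation reverses the order and mirrors each block: $\overleftarrow{\psi(w)p} = \overleftarrow{p}\,\overleftarrow{q_{w_n}}\,\overleftarrow{p}\cdots\overleftarrow{q_{w_1}}\,\overleftarrow{p}$. Because $p$ and every $q_a$ are palindromes we have $\overleftarrow{p} = p$ and $\overleftarrow{q_{w_i}} = q_{w_i}$, so this becomes $p\,q_{w_n}\,p\,q_{w_{n-1}}\cdots p\,q_{w_1}\,p$. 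Finally, since $w$ is a palindrome, $w_i = w_{n+1-i}$ for all $i$, so $q_{w_{n+1-i}} = q_{w_i}$ and the reversed expression coincides termwise with the original $p q_{w_1} p q_{w_2} p \cdots q_{w_n} p = \psi(w)p$. Hence $\psi(w)p = \overleftarrow{\psi(w)p}$, as required.

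The proof is essentially a bookkeeping argument, so I do not anticipate a genuine obstacle; the main thing to be careful about is the two layers of symmetry in part (ii) — the reversal of the block order and the reversal within each block must both be tracked, and it is exactly the palindromicity of $w$ that aligns the block indices after reversing, while the palindromicity of $p$ and the $q_a$ fixes each block individually. I would present the computation cleanly, perhaps by indexing the blocks, rather than relying on an informal ``by symmetry'' remark, to make the cancellation transparent.
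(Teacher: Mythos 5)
Your proof is correct, and it is the standard argument: the paper itself states this observation without proof, attributing it to Hof, Knill and Simon, so there is nothing to diverge from. Both parts are handled properly — in (i) you correctly use that $w$ is followed in the fixed point $\bu$ by some letter $c$ whose image starts with $p$, so $\psi(w)p$ is a prefix of the factor $\psi(wc)$ of $\psi(\bu)=\bu$; in (ii) the block-by-block reversal computation, using palindromicity of $p$ and each $q_a$ together with $w_i=w_{n+1-i}$, is exactly the intended bookkeeping.
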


By repeated application of Observation~\ref{obs:image_under_classP_morphism}, one obtains the following corollary,
which was first shown in~\cite{hof-knill-simon-cmp-174}.

\begin{coro}\label{c:Ppalindromic}
The language of a fixed point of a morphism in class $\Pc$ contains infinitely many palindromes.
\end{coro}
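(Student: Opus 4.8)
The plan is to derive Corollary~\ref{c:Ppalindromic} by iterating the two parts of Observation~\ref{obs:image_under_classP_morphism} to produce an unbounded sequence of palindromes, all lying in $\Lc(\psi)$. First I would choose a seed palindrome. Since $\psi$ is primitive, every letter $a\in A$ occurs in $\bu$, so each single letter $a$ is a palindrome belonging to $\Lc(\psi)$; I would take $w_0=a$ for some fixed letter $a$. This gives a palindrome of length $1$ to start the induction.

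Next I would define the sequence $w_{k+1}=\psi(w_k)\,p$ and argue by induction that each $w_k$ is a palindrome in $\Lc(\psi)$. For the membership in $\Lc(\psi)$, part~(i) of the Observation gives exactly the implication $w_k\in\Lc(\psi)\Rightarrow \psi(w_k)p\in\Lc(\psi)$, so $w_{k+1}\in\Lc(\psi)$ whenever $w_k\in\Lc(\psi)$. For the palindromic property, part~(ii) gives $w_k$ palindrome $\Rightarrow \psi(w_k)p$ palindrome, so $w_{k+1}$ is a palindrome whenever $w_k$ is. Since $w_0=a$ satisfies both properties, the induction closes and every $w_k$ is a palindrome in $\Lc(\psi)$.

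It remains to verify that the $w_k$ are genuinely distinct, so that we indeed obtain infinitely many palindromes rather than the same one repeated. The natural way is to control lengths: setting $\ell_k=|w_k|$, the recurrence gives $\ell_{k+1}=|\psi(w_k)|+|p|$, and since $\psi$ is primitive its incidence matrix has no zero columns, so $|\psi(w_k)|\geq |w_k|=\ell_k$, whence the sequence $(\ell_k)$ is nondecreasing. To get strict unboundedness I would note that primitivity forces the image of at least one letter to have length at least $2$ (otherwise $\psi$ would be a permutation of letters and could not be primitive on an alphabet admitting an aperiodic-type fixed point), so applying $\psi$ sufficiently many times strictly increases the length. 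Hence $\ell_k\to\infty$, the palindromes $w_k$ have pairwise distinct lengths for large $k$, and $\Lc(\psi)$ contains infinitely many of them.

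The main obstacle is the last step: ensuring the lengths grow without bound rather than stabilizing. The cleanest route is to invoke primitivity directly through the incidence matrix $\boldsymbol{M}_{\!\psi}$, whose Perron--Frobenius eigenvalue exceeds $1$ for any primitive morphism that is not a mere letter permutation; iterating then forces $|\psi^k(a)|\to\infty$, and since $w_{k}$ contains $\psi^{k-1}(w_0)=\psi^{k-1}(a)$ as a factor, we get $\ell_k\to\infty$. A degenerate subtlety worth a remark is the case where $\psi$ acts as a permutation on letters, but such a morphism has only periodic fixed points and the statement is then either vacuous or trivially handled, so it does not affect the conclusion.
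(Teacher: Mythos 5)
Your proof is correct and follows exactly the route the paper indicates: it iterates Observation~\ref{obs:image_under_classP_morphism} starting from a single-letter palindrome to produce the palindromes $w_{k+1}=\psi(w_k)p$ in $\Lc(\psi)$. The paper leaves the iteration (and the length-growth argument guaranteeing infinitely many distinct palindromes) implicit; you have filled in those details correctly using primitivity.
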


In fact, as was noticed in~\cite{allouche-baake-cassaigne-damanik-tcs-292}, the same statement as Corollary~\ref{c:Ppalindromic}
is valid for fixed points of morphisms that are not in class $\Pc$ by themselves, but some of their conjugates is, see
Definition~\ref{d:conjugace} below. The reason for that is that languages of infinite words fixed by conjugated primitive
morphisms coincide.

\begin{df}\label{d:conjugace}
Morphisms $\psi_1,\psi_2:A^*\to A^*$ are said to be \emph{conjugated}, denoted by $\psi_1\sim\psi_2$,
if there is a word $w\in A^*$ such that either $\psi_1(a)w=w\psi_2(a)$ for every $a\in A$ or
$w\psi_1(a)=\psi_2(a)w$ for every $a\in A$.
\end{df}

The proof of the following fact can be found for example in~\cite{allouche-baake-cassaigne-damanik-tcs-292}.

\begin{prop}\label{p:conjuglang}
Let $\psi_1$ be a primitive morphism and let $\psi_2$ be conjugated with $\psi_1$. Then
$\Lc(\psi_1)=\Lc(\psi_2)$.
\end{prop}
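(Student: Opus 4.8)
The plan is to reduce everything to a single language inclusion and then upgrade it to equality by minimality of the two subshifts. By the symmetry of the two cases in Definition~\ref{d:conjugace} (the relation $w\psi_1(a)=\psi_2(a)w$ is the relation $\psi_2(a)w=w\psi_1(a)$ with the roles of $\psi_1,\psi_2$ exchanged), I may assume we are in the first case, i.e.\ $\psi_1(a)w=w\psi_2(a)$ for every $a\in A$. First I would extend this from letters to all words by a straightforward induction on length, obtaining
\[
\psi_1(v)\,w=w\,\psi_2(v)\qquad\text{for every }v\in A^*.
\]
Taking Parikh vectors of $\psi_1(a)w=w\psi_2(a)$ and cancelling $\vec{V}(w)$ gives $\vec{V}(\psi_1(a))=\vec{V}(\psi_2(a))$ for every $a$, hence $\boldsymbol{M}_{\psi_1}=\boldsymbol{M}_{\psi_2}$; in particular $\psi_2$ is primitive as well, and both $\Lc(\psi_1)$ and $\Lc(\psi_2)$ are languages of minimal subshifts.

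Next I would locate a single infinite word lying in both pictures. Let $\bu$ be a fixed point of $\psi_1$. Applying the extended identity to the prefixes $u_1\cdots u_n$ of $\bu$ and letting $n\to\infty$ (the prefix $\psi_1(u_1\cdots u_n)$ of $\psi_1(\bu)=\bu$ has length at least $n$, so it converges to $\bu$) yields $\bu=w\,\psi_2(\bu)$. Thus the word $\boldsymbol{v}:=\psi_2(\bu)$ is exactly $\bu$ with its prefix $w$ deleted; in particular $\boldsymbol{v}$ is a shift of $\bu$, so $\Lc(\boldsymbol{v})\subseteq\Lc(\psi_1)$. (If $w=\varepsilon$ then $\psi_1=\psi_2$ and there is nothing to prove, so I assume $w\neq\varepsilon$.)

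The technical core is to show $\Lc(\psi_2)\subseteq\Lc(\boldsymbol{v})$. From $\bu=w\boldsymbol{v}$ and $\boldsymbol{v}=\psi_2(\bu)$ I obtain the self-similar relation $\boldsymbol{v}=\psi_2(w)\,\psi_2(\boldsymbol{v})$, which says that $\psi_2(\boldsymbol{v})$ is a tail of $\boldsymbol{v}$; iterating, $\psi_2^k(\boldsymbol{v})$ is a tail of $\boldsymbol{v}$ for every $k$. Writing $b_0$ for the first letter of $\boldsymbol{v}$, each $\psi_2^k(b_0)$ is therefore a factor of $\boldsymbol{v}$. Using primitivity of $\psi_2$ to choose $m$ with $\psi_2^m(b_0)$ containing every letter $c$, the word $\psi_2^{k+m}(b_0)=\psi_2^k(\psi_2^m(b_0))$ contains $\psi_2^k(c)$ and lies in $\Lc(\boldsymbol{v})$; hence every $\psi_2^k(c)$, and so every factor in $\Lc(\psi_2)$, belongs to $\Lc(\boldsymbol{v})$. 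Combining the two inclusions gives $\Lc(\psi_2)\subseteq\Lc(\boldsymbol{v})\subseteq\Lc(\psi_1)$.

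Finally I would invoke minimality. The inclusion $\Lc(\psi_2)\subseteq\Lc(\psi_1)$ means the subshift of $\psi_2$ is a nonempty closed shift-invariant subset of the minimal subshift of $\psi_1$, so the two subshifts coincide and $\Lc(\psi_1)=\Lc(\psi_2)$. I expect the main obstacle to be the core inclusion $\Lc(\psi_2)\subseteq\Lc(\boldsymbol{v})$: one must convert the purely algebraic conjugacy identity into a statement about which factors actually occur, and the self-similarity $\boldsymbol{v}=\psi_2(w)\,\psi_2(\boldsymbol{v})$ together with primitivity is precisely what makes this possible. The convergence step establishing $\bu=w\,\psi_2(\bu)$ also needs a little care, but is routine in the product topology.
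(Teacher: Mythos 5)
The paper does not actually prove this proposition; it only refers the reader to Allouche--Baake--Cassaigne--Damanik, so there is no in-text argument to compare yours against, and I can only judge your proof on its own terms. It is correct, and it is the standard argument: the conjugacy identity, extended to words and passed to the limit along prefixes of a fixed point $\bu$ of $\psi_1$, yields $\bu=w\,\psi_2(\bu)$, so $\boldsymbol{v}=\psi_2(\bu)$ is a tail of $\bu$ satisfying $\boldsymbol{v}=\psi_2(w)\psi_2(\boldsymbol{v})$; this self-similarity places every $\psi_2^k(b_0)$, hence (by primitivity of $\psi_2$, which you correctly deduce from the Parikh-vector cancellation) every $\psi_2^k(c)$, inside $\Lc(\boldsymbol{v})\subseteq\Lc(\psi_1)$, and minimality upgrades $\Lc(\psi_2)\subseteq\Lc(\psi_1)$ to equality. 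Three standard facts are used silently and deserve a mention: a primitive morphism is non-erasing (an erasing letter gives a zero column of $\boldsymbol{M}_{\psi_1}$, contradicting primitivity), which is what makes the limit $\bu=w\,\psi_2(\bu)$ legitimate; every element of $\Lc(\psi_2)$ is a factor of $\psi_2^k(z_0)$ for some $k$, with $z_0$ the first letter of a fixed point of $\psi_2$, which is the step turning ``every $\psi_2^k(c)$ lies in $\Lc(\boldsymbol{v})$'' into ``$\Lc(\psi_2)\subseteq\Lc(\boldsymbol{v})$''; and fixed points of primitive morphisms are uniformly recurrent, so the subshift of $\psi_1$ is minimal. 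All three are classical and consistent with the paper's conventions, which already presuppose (in defining $\Lc(\psi)$) that the primitive morphisms in question possess fixed points. Note finally that the appeal to minimality is doing real work: within a single case of Definition~\ref{d:conjugace} your construction produces only one inclusion (applying it to a fixed point of $\psi_2$ instead gives $\psi_1(\boldsymbol{z})=w\boldsymbol{z}$, from which the reverse inclusion does not follow directly), so it cannot simply be replaced by running the argument twice.
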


In~\cite{allouche-baake-cassaigne-damanik-tcs-292} the authors also make the observation that any morphism of
class $\Pc$ is conjugated to a morphism of the form~\eqref{eq:def_class_P} where the palindrome $p$ is either the empty
word or a single letter. From now on, in view of Proposition~\ref{p:conjuglang}, we will only consider morphisms in class
$\Pc$ of this form. The following lemma shows how palindromic length of a finite word changes under application of such
a morphism.

\begin{lem}\label{lem:pall_of_image_for_p_shorter_than_2}
Let $\psi: A^*\to A^*$ be a morphism in class $\Pc$ in the form~\eqref{eq:def_class_P}.
\begin{enumerate}
\item
  If $p=\varepsilon$, then $\pall{\psi(w)}\leq\pall{w}$ for every $w\in A^*$.
\item
  Suppose $|p|=1$. If $\pall{w}$ is even then $\pall{\psi(w)}\leq\pall{w}$, otherwise $\pall{\psi(w)}\leq\pall{w}+1$.
\item
  If $\pall{\psi(w)}=\pall{w}+1$ then $\pall{\psi^2(w)}\leq\pall{\psi(w)}$.
\end{enumerate}
\end{lem}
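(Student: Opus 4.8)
The plan is to prove the three parts by working directly with an optimal palindromic factorization of $w$ and tracking what happens to it under $\psi$. Suppose $w = p_1 p_2 \cdots p_K$ is a decomposition into $K=\pall{w}$ palindromes. Since $\psi$ is a morphism, $\psi(w) = \psi(p_1)\psi(p_2)\cdots\psi(p_K)$, so it suffices to understand the palindromic length of each block $\psi(p_i)$ and how the blocks can be merged at their boundaries. By Observation~\ref{obs:image_under_classP_morphism}(ii), if $p_i$ is a palindrome then $\psi(p_i)p$ is a palindrome; this is the single fact I would lean on throughout.

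For part (i), where $p=\varepsilon$, the observation gives immediately that each $\psi(p_i)$ is itself a palindrome, so $\psi(w)=\psi(p_1)\cdots\psi(p_K)$ is a concatenation of $K$ palindromes and $\pall{\psi(w)}\le K=\pall{w}$. For part (ii) with $|p|=1$, write $p$ for the single letter and use that $\psi(p_i)p$ is a palindrome for each $i$. The idea is to insert and cancel copies of $p$ at the block boundaries: I would pair the blocks up two at a time. For a pair, $\psi(p_i)\,\psi(p_{i+1})$, note that $\psi(p_i)p$ is a palindrome and I want to absorb the extra $p$. The key manipulation is that $\psi(p_i)\psi(p_{i+1})p = \bigl(\psi(p_i)p\bigr)\bigl(p^{-1}\psi(p_{i+1})p\bigr)$; here $p^{-1}\psi(p_{i+1})p$ need not be a palindrome in general, so instead I would exploit that $\psi(a)=pq_a$ begins with $p$, meaning every $\psi(p_{i+1})$ starts with the letter $p$, so $p^{-1}$ can legitimately cancel this leading letter. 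Grouping the $K$ palindromes into $\lceil K/2\rceil$ consecutive pairs and telescoping the surplus $p$'s through the factorization yields a decomposition of $\psi(w)$ (or of $\psi(w)$ together with a trailing $p$) into palindromes, giving the bound $\pall{w}$ when $K$ is even and $\pall{w}+1$ when $K$ is odd because the leftover single block contributes one unpaired palindrome.

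For part (iii), the hypothesis $\pall{\psi(w)}=\pall{w}+1$ forces the odd case of part (ii), so $\pall{w}$ is odd and the optimal factorization of $\psi(w)$ has an unavoidable extra palindrome coming from the trailing boundary. The plan is to apply $\psi$ once more and show this extra palindrome is absorbed on the second iteration. Concretely, from part (ii) applied to $\psi(w)$: since $\pall{\psi(w)}=\pall{w}+1$ is even (as $\pall{w}$ is odd), part (ii) gives $\pall{\psi^2(w)}\le\pall{\psi(w)}$, which is exactly the desired inequality. The parity bookkeeping is what makes this clean: the surplus $p$ that costs one extra palindrome on the first application lands in a position where, the factorization of $\psi(w)$ now having even palindromic length, the even-case of part (ii) applies and no further surplus is incurred.

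The main obstacle I anticipate is making the cancellation of $p$ at block boundaries rigorous in part (ii): I must verify that the rearrangement $\psi(p_i)\psi(p_{i+1}) \cdots$ with inserted and removed copies of $p$ genuinely yields palindromes at each grouped block, which relies on the fact that $\psi(p_{i+1})$ begins with $p$ (so the formal inverse $p^{-1}$ is well-defined as a prefix cancellation in the sense defined in the Preliminaries) together with Observation~\ref{obs:image_under_classP_morphism}(ii). Handling the boundary blocks carefully, and confirming that the count of palindromes produced is exactly $\lceil K/2\rceil$ in the even case and one more in the odd case, is where the bulk of the attention is needed; parts (i) and (iii) then follow by direct application of Observation~\ref{obs:image_under_classP_morphism}(ii) and part (ii) respectively.
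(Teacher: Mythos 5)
Your proposal is correct and follows essentially the same route as the paper: apply $\psi$ blockwise to an optimal palindromic factorization $w=p_1\cdots p_K$, use Observation~\ref{obs:image_under_classP_morphism} to see that each $\psi(p_i)p$ is a palindrome, shuttle the single letter $p$ between consecutive blocks (so that the factors alternate between $\psi(p_{2i-1})p$ and $p^{-1}\psi(p_{2i})$, with one extra factor $p$ inserted when $K$ is odd), and settle (iii) by the parity argument that $\pall{\psi(w)}=\pall{w}+1$ forces $\pall{w}$ odd, hence $\pall{\psi(w)}$ even. The one step you flag but leave implicit --- that $p^{-1}\psi(p_{i+1})$ is a palindrome --- follows immediately by stripping the first and last letters of the palindrome $\psi(p_{i+1})p$, which begins and ends with $p$.
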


\begin{proof}
i) Let $w=p_1\cdots p_k$, where $p_1,\ldots,p_k$ are palindromes. Then by
Observation~\ref{obs:image_under_classP_morphism},
$\psi(w)=\psi(p_1)\cdot\psi(p_2)\cdots\psi(p_k)$ is a concatenation of $\psi(w)$ into $k$ palindromes.

ii) Let $w=p_1\cdots p_{2k}$, where  $p_1,\ldots,p_{2k}$ are palindromes. Then by
Observation~\ref{obs:image_under_classP_morphism},
\begin{equation}\label{eq:psi_w2k_into_pal}
\psi(w) = \underbrace{\psi(p_1)p}_{q_1}\cdot\underbrace{p^{-1}\psi(p_2)}_{q_2}\cdot
\underbrace{\psi(p_3)p}_{q_3}\cdot\underbrace{p^{-1}\psi(p_4)}_{q_4}\cdots
\underbrace{\psi(p_{2k-1})p}_{q_{2k-1}}\cdot\underbrace{p^{-1}\psi(p_{2k})}_{q_{2k}}
\end{equation}
is a concatenation of $\psi(w)$ into $2k$ palindromes $q_1,\ldots,q_{2k}$.
If $w=p_1\cdots p_{2k+1}$, where  $p_1,\ldots,p_{2k+1}$ are palindromes then
$\psi(w)$ can be factorized into $2k+2$ palindromes similarly:
\begin{equation}\label{eq:psi_w2k+1_into_pal}
\psi(w) = \underbrace{\psi(p_1)p}_{q_1}\cdot\underbrace{p^{-1}\psi(p_2)}_{q_2}\cdots
\underbrace{\psi(p_{2k-1})p}_{q_{2k-1}}\cdot\underbrace{p^{-1}\psi(p_{2k})}_{q_{2k}}\underbrace{\cdot\ p\ \cdot}_{q_{2k+1}}
\underbrace{p^{-1}\psi(p_{2k+1})}_{q_{2k+2}}.
\end{equation}

iii) By i) and ii), $\pall{\psi(w)}=\pall{w}+1$ may happen only if $\pall{w}$ is odd. Then $\pall{\psi(w)}$ is even and thus
$\pall{\psi^2(w)}\leq \pall{\psi(w)}$.
\end{proof}

The above lemma states that by applying a morphism $\psi$ of class $\Pc$ to a word, palindromic length can increase
by at most one, and this happens only at alternating iterations of the morphism $\psi$. With this knowledge, we can
find an estimate on the growth of the palindromic length $\PL{u}(n)$.

\begin{prop}\label{prop:upper_est_on_pall}
Let $\psi:A^*\to A^*$ be a morphism in class $\Pc$ such that for each $a\in A$ it holds that
$\psi(a)=pq_a$, where $p\in\{\varepsilon\}\cup A$ and $q_a$ is a palindrome.
Let us denote
\[
C:=\max\{\pall{x}:\exists\,a\in A, \text{$x$ is a proper prefix of $q_a$}\}.
\]
Then for a fixed point $\bu$ of $\psi$ we have
\[
\limsup_{n\to\infty}\frac{\PL{u}(n)}{\ln n} \leq \frac{2C+\frac{3}{2}|p|}{\ln\Lambda},
\]
where $\Lambda$ is the dominant eigenvalue of the incidence matrix of $\psi$.
\end{prop}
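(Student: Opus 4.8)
The key idea is that any factor $w$ of a fixed point $\bu$ can be "decoded" backwards through the morphism: since $\bu = \psi(\bu)$, every factor $w$ of length $n$ is, up to a bounded prefix and suffix, the image $\psi(v)$ of a shorter factor $v$ of length roughly $n/\Lambda$. The plan is to iterate this decoding about $\log_\Lambda n$ times, track how the palindromic length grows at each step using Lemma~\ref{lem:pall_of_image_for_p_shorter_than_2}, and control the "boundary" pieces (the bounded prefix/suffix that do not come from a clean image) by the constant $C$.

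First I would set up the desugaring step carefully. Given a factor $w$ of $\bu$, I want to write $w = x\,\psi(v)\,y$ where $v\in\Lc(\psi)$, and $x$ is a suffix of some $\psi(a)$ while $y$ is a prefix of some $\psi(b)$. By the definition of class $\Pc$, $\psi(a)=pq_a$ with $q_a$ a palindrome, so the boundary words $x$ and $y$ are (up to at most $|p|$ extra letters coming from $p$) prefixes or suffixes of palindromes $q_a$; reversing a suffix of a palindrome gives a prefix, so $\pall{x}$ and $\pall{y}$ are each bounded by $C+O(|p|)$, using the defining constant $C$ and the fact that $p$ with $|p|\le 1$ contributes a bounded amount. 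This yields the subadditivity estimate
\begin{equation}\label{eq:onestep}
\pall{w}\ \le\ \pall{\psi(v)} + \pall{x} + \pall{y}\ \le\ \pall{\psi(v)} + 2C + c_0|p|
\end{equation}
for a small explicit constant $c_0$.

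Next I would iterate. Writing $n_k$ for the length after $k$ desugaring steps, we have $n_k\approx n/\Lambda^k$, so after $N\approx \log_\Lambda n$ steps the remaining central factor $v_N$ has bounded length and hence bounded palindromic length. Summing inequality~\eqref{eq:onestep} over the $N$ steps gives $\pall{w}\le N(2C+c_0|p|)+O(1)$, which already produces the bound with a constant of the form $(2C+c_0|p|)/\ln\Lambda$. The point of the sharper $\frac{3}{2}|p|$ in the statement is that the $|p|$-contribution is not paid in full at every step: by parts ii) and iii) of Lemma~\ref{lem:pall_of_image_for_p_shorter_than_2}, the image under $\psi$ raises palindromic length by at most one and only on alternating iterations, so over two consecutive steps the amortized cost attributable to $p$ is $\tfrac32|p|$ rather than $2|p|$. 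I would make this precise by grouping the iterations in pairs and applying part~iii) to show that the extra $+1$ incurred at an odd step is absorbed at the following even step.

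The main obstacle will be the bookkeeping in the desugaring step, specifically guaranteeing that each factor really does factor as $x\,\psi(v)\,y$ with boundary pieces of the controlled form, and that the recursion length is genuinely $\log_\Lambda n + O(1)$ rather than being inflated by the boundary trimming. This requires primitivity of $\psi$ (so that $\Lambda>1$ is the Perron eigenvalue and $|\psi^k(a)|\sim C_a\Lambda^k$) together with the synchronization property of factors of fixed points, ensuring the preimage $v$ is uniquely determined up to bounded ambiguity. Once the desugaring is shown to shrink lengths by the factor $\Lambda$ up to bounded additive error, the remaining summation and the amortization argument are routine, and taking $\limsup_{n\to\infty}$ of $\PL{u}(n)/\ln n$ yields the claimed bound.
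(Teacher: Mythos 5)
Your proposal follows essentially the same route as the paper's proof: decompose each long factor as $x\,\psi(v')\,y$ via Moss\'e's recognizability, bound the boundary pieces by $C$ (plus $|p|$) using the palindromicity of the $q_a$, iterate roughly $\log_\Lambda n$ times with the length lower bound coming from Perron--Frobenius, and recover the $\tfrac{3}{2}|p|$ by amortizing the $+1$ from parts~ii) and~iii) of Lemma~\ref{lem:pall_of_image_for_p_shorter_than_2} over pairs of consecutive iterations. The paper's version is just a sharper bookkeeping of the same argument ($\pall{x}\le C$, $\pall{y}\le C+|p|$, and the explicit bound $2kC+n_0+|p|(k+\lceil k/2\rceil)$).
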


\begin{proof}
  First realize that under our assumptions, the set $\{\pall{x}:\exists\,a\in A, \text{$x$ is a proper prefix of $q_a$}\}$
  is non-empty. Otherwise, $\psi(a)=p$ for every letter $a$, and as $|p|\leq 1$, the morphism $\psi$ is not primitive (and
  thus not in class $\Pc$). Therefore the constant $C$ is well defined. Moreover, note that it is enough to consider only
  prefixes (in the definition of $C$) since $q_a$ are palindromes and thus if $x$ is a suffix of $q_a$ then
  $\overleftarrow{x}$ is its prefix and $\pall{x}=\pall{\overleftarrow{x}}$.

  Consider a fixed point $\bu$ of $\psi$. If it is eventually periodic, then by~\cite{frid-puzynina-zamboni-aam-50}, the
  palindromic length of its factors is bounded and the statement of the proposition is trivially valid. Assume that $\psi$
  has an aperiodic fixed point. Since $\psi$ is a primitive morphism, every sufficiently long factor of $\bu$ has a uniquely
  determined preimage~\cite{mosse-bsmf-124}. More precisely, there exists $n_0\in\N$ such that for each
  $v\in\Lc(\bu)$, $|v|>n_0$, there are factors $x,v',y$ of $\bu$ such that $v=x\psi(v')y$, where
  $v'\neq\varepsilon$, $x$ is a proper suffix of $\psi(a)$ and $y$ is
  a proper prefix of $\psi(b)$ for some letters $a,b\in A$, cf. Figure~\ref{fig:phiimage}.

  \begin{figure}[!htp]
    \bigskip
    \centering
    \hspace{-2em}\includegraphics[scale=0.8]{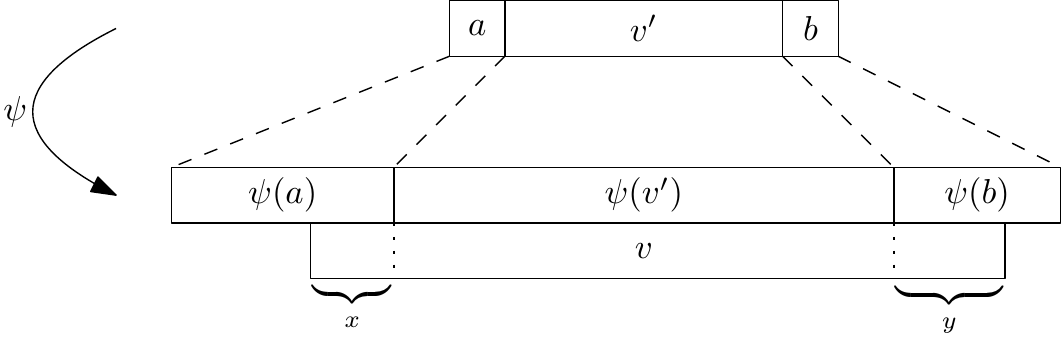}
    \caption{Illustration of the construction of preimage of $v$ in the proof of Proposition~\ref{prop:upper_est_on_pall}.}\label{fig:phiimage}
  \end{figure}

  Obviously, $\pall{v}\leq\pall{x}+\pall{\psi(v')}+\pall{y}$.
  By definition of $C$, we have $\pall{x}\leq C$, since $\psi(a)=pq_a$ we have
  $\pall{y}\leq C+|p|$. Using Lemma~\ref{lem:pall_of_image_for_p_shorter_than_2}, $\pall{\psi(v')}\leq\pall{v'}+\delta$,
  where $\delta=1$ if $\pall{v'}$ is odd and $|p|=1$, and $\delta=0$ otherwise. Together, we obtain
  \[
  \pall{v}\leq 2C+|p|+\pall{v'}+\delta.
  \]
  If $|v'|>n_0$ we apply the same procedure to $v'$. In this way, for a given  $v\in\Lc(\psi)$  we create a sequence
  $v=v^{(0)},v^{(1)},\ldots,v^{(k)}$ such that for each $i=1,2,\ldots,k$ we have
  \begin{gather}
    \pall{v^{(i-1)}} \leq 2C+|p|+\pall{v^{(i)}} + \delta_i, \label{eq:pall_in_seq_v} \\
    |v^{(i-1)}|\geq |\psi(v^{(i)})|, \label{eq:len_in_seq_v}
  \end{gather}
  and $|v^{(k)}|\leq n_0$.
  From~(\ref{eq:pall_in_seq_v}) we get
  \begin{equation}\label{eq:horni}
    \pall{v} \leq 2kC + n_0 + |p|\Big(k+\Big\lceil\frac{k}{2}\Big\rceil\Big),
  \end{equation}
  where we used that $|v^{(k)}|\leq n_0$, and iii) of Lemma~\ref{lem:pall_of_image_for_p_shorter_than_2}.

  On the other hand,~\eqref{eq:len_in_seq_v} implies that
  \begin{equation}\label{eq:dolni}
    |v| \geq |\psi^k(v^{(k)})| = \boldsymbol{1}\cdot \boldsymbol{M}_{\!\psi}^{k}\cdot\vec{V}(v^{(k)}).
  \end{equation}
  Since $\psi$ is a primitive morphism, by the Perron-Frobenius theorem, the dominant eigenvalue $\Lambda$ of the
  matrix $M_\psi$ is positive and strictly greater that absolute values of all other eigenvalues of $M_\psi$. Denote by
  $R$ the non-singular matrix such that $RM_\psi R^{-1}$ is in the Jordan canonical form, i.e., it is block diagonal.
  Notably, the block corresponding to the eigenvalue $\Lambda$ is of dimension $1\times 1$. Without loss of generality,
  let it be the first block on the diagonal of $RM_\psi R^{-1}$. Then necessarily,
  \[
  \lim_{k\to\infty} \frac1{\Lambda^k} M_\psi^k = R\begin{pmatrix}
                                             1 & 0 & \cdots & 0 \\
                                             0 & 0 & \cdots & 0 \\
                                             \vdots & \vdots & \ddots & \vdots \\
                                             0 & 0 & \cdots & 0
                                           \end{pmatrix} R^{-1}.
  \]
  Combining with~\eqref{eq:dolni}, we have that
  \[
  |v| \geq |\psi^k(v^{(k)})| = c_1\Lambda^k(1+o(1)).
  \]
  Putting the latter estimate together with~\eqref{eq:horni},
  \[
  \frac{\pall{v}}{\ln|v|} \leq
  \frac{2kC + n_0 + |p|\Big(k+\Big\lceil\frac{k}{2}\Big\rceil\Big)}
       {\ln c_1+k\ln\Lambda+\ln(1+o(1))}.
  \]
  If $|v|=n$ tends to infinity then $k$ tends to infinity as well. The validity of the proposition follows.
\end{proof}

Proposition~\ref{prop:upper_est_on_pall} provides an upper estimate on the palindromic length $\PL{u}(n)$ for any fixed
point $\bu$ of any morphism of class $\Pc$. The result is valid independently of the size of the alphabet. Reducing our
consideration to binary infinite words, we recall the result of Bo Tan~\cite{tan-tcs-389}. He shows that any binary
morphism $\varphi$ producing a fixed point with infinitely many palindromes is either itself conjugated to a morphism in
class $\Pc$, or this can be said about its second iterate $\varphi^2$. This allows us to formulate a summarizing
corollary to our Proposition~\ref{prop:upper_est_on_pall}.

\begin{thm}\label{thm:hlavni}
  Let $\bu$ be a fixed point of a primitive morphism over a binary alphabet. Then there is a constant
  $K>0$ such that either
  \[
  \PL{u}(n)\leq K\ln n\qquad \text{for all $n\in\N$},
  \]
  or
  \[
  \PL{u}(n)\geq Kn\qquad \text{for all $n\in\N$}.
  \]
\end{thm}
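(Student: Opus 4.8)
The plan is to prove the dichotomy by a single case split on the language $\Lc(\bu)$, according to whether it contains infinitely many palindromes or only finitely many. These two alternatives are exhaustive and mutually exclusive, and I will argue that the former forces the logarithmic bound while the latter forces the linear one.

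First I would treat the case where $\Lc(\bu)$ contains infinitely many palindromes. Writing $\bu$ as a fixed point of a primitive binary morphism $\varphi$, I would invoke Bo Tan's classification~\cite{tan-tcs-389}: either $\varphi$ itself, or its (again primitive) square $\varphi^2$, is conjugated to a morphism $\psi$ of class $\Pc$. Since $\bu$ is a fixed point of both $\varphi$ and $\varphi^2$, primitivity gives $\Lc(\bu)=\Lc(\varphi)=\Lc(\varphi^2)$, and Proposition~\ref{p:conjuglang} identifies this common language with $\Lc(\psi)$. As $\PL{u}(n)$ depends only on the set of factors of length $n$, hence only on $\Lc(\bu)$, I may replace $\bu$ by a fixed point of $\psi$ without altering the function $\PL{u}$, at which point Proposition~\ref{prop:upper_est_on_pall} bounds $\limsup_{n\to\infty}\PL{u}(n)/\ln n$ by a finite constant $M$. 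To pass from this $\limsup$ bound to one valid for every $n$, I would fix $\varepsilon>0$, obtain an index $N$ with $\PL{u}(n)\leq(M+\varepsilon)\ln n$ for $n>N$, and for the finitely many $2\leq n\leq N$ use the trivial estimate $\PL{u}(n)\leq n\leq(N/\ln 2)\ln n$; taking $K=\max\{M+\varepsilon,\,N/\ln 2\}$ yields the first alternative (read for $n\geq 2$, as $\ln 1=0$).

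Next I would treat the case where $\Lc(\bu)$ contains only finitely many palindromes. Here I set $L$ to be the maximal length of a palindrome in $\Lc(\bu)$. Any factorization $w=p_1\cdots p_m$ of a length-$n$ factor $w$ into palindromes has $|p_i|\leq L$ for each $i$, so $n\leq mL$ and hence $\pall{w}\geq n/L$; maximizing over factors of length $n$ gives $\PL{u}(n)\geq n/L$ for all $n$. This is the second alternative with $K=1/L$, a value at most $1$ that even accommodates $n=1$.

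The substantive content is carried by the two imported ingredients, Proposition~\ref{prop:upper_est_on_pall} and Tan's classification, so the only genuinely new step is the elementary lower bound above. The point I expect to require the most care is the logical matching of the two cases to the two alternatives: I must use that, for binary primitive morphisms, ``infinitely many palindromes'' is exactly the hypothesis of Tan's theorem, while the reverse implication (a morphism in class $\Pc$ produces infinitely many palindromes, Corollary~\ref{c:Ppalindromic}) guarantees the two cases cannot overlap. I also need to justify that replacing $\bu$ by a fixed point of $\psi$ is harmless, which rests on $\PL{u}$ being a function of the language alone, together with the invariance of the language under conjugacy and under passage from $\varphi$ to its primitive power $\varphi^2$.
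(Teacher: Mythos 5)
Your proposal is correct and follows essentially the same route as the paper: a case split on whether $\Lc(\bu)$ contains finitely or infinitely many palindromes, using the trivial $\pall{w}\geq n/L$ bound in the first case and Tan's classification plus Proposition~\ref{prop:upper_est_on_pall} in the second. Your extra care in converting the $\limsup$ bound into a bound valid for all $n$ (and in justifying the replacement of $\bu$ by a fixed point of $\psi$ via language invariance) only makes explicit details the paper leaves implicit.
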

\begin{proof}
  Let $\Lc(\bu)$ contain only finitely many palindromes. Then obviously for every factor $w$ of length $n$, we have
  $\pall{w}\geq\frac{1}{c}n$, where $c$ is the length of the longest palindrome in $\Lc(\bu)$. Thus $\PL{u}(n)$ grows
  at least linearly.

  On the other hand, if $\Lc(\bu)$ contains infinitely many palindromes then it coincides with
  the language $\Lc(\psi)$ for some morphism $\psi$ in class $\Pc$ (cf.~\cite{tan-tcs-389}) and
  thus we can employ Proposition~\ref{prop:upper_est_on_pall}.
\end{proof}

Note that on morphisms over an alphabet with more than two letters, one cannot prove a result as strong as that of
Bo Tan~\cite{tan-tcs-389}. A counterexample was given on a ternary morphism in~\cite{labbe-ejc-21}.

\section{Fibonacci and Thue-Morse words}\label{sec:FibThu}

Let us provide an upper bound on the constant $K$ of Theorem~\ref{thm:hlavni} for the Fibonacci word $\boldsymbol{f}$
and for the Thue-Morse word $\boldsymbol{t}$.

\paragraph{The Fibonacci word}
The Fibonacci morphism
$\varphi_F:a\mapsto ab,b\mapsto a$ belongs to class $\Pc$ (cf.\ Example~\ref{ex:fibmorphism}). Since its fixed point,
the Fibonacci word $\boldsymbol{f}$, is a Sturmian word, it follows from the result by Frid~\cite{frid-ejc-71} that
$\limsup_{n\to\infty}\PL{f}(n)=+\infty$.

Let us apply Proposition~\ref{prop:upper_est_on_pall} in this case. Obviously $p=a$ and $C=0$
and the dominant eigenvalue of the incidence matrix $\left(\begin{smallmatrix}1 & 1 \\ 1 & 0\end{smallmatrix}\right)$
of $\varphi_F$ is the golden mean $\tau=\frac{1+\sqrt{5}}{2}$. Therefore
\begin{equation}\label{eq:fibonacci_upper_est_1}
  \limsup_{n\to\infty}\frac{\PL{f}(n)}{\ln n} \leq \frac{3}{2\ln\tau}.
\end{equation}
The Fibonacci word $\boldsymbol{f}$ is also the fixed point of $\varphi_F^3:a\mapsto abaab,b\mapsto aba$.
Consider morphism $\psi: a\mapsto ababa,b\mapsto aba$. Taking $w=aba$, we see that
\[
\varphi_F^3(a)w=abaababa=w\psi(a),\qquad \varphi_F^3(b)w=abaaba=w\psi(b),
\]
which by Definition~\ref{d:conjugace} means that $\varphi_F^3\sim\psi$. According to Proposition~\ref{p:conjuglang},
we have $\Lc(\varphi_F^3)=\Lc(\psi)$. Let us apply Proposition~\ref{prop:upper_est_on_pall} to $\psi$.
Obviously $p=\varepsilon$ and $C=2$. The dominant eigenvalue of $\boldsymbol{M}_{\!\psi}$ is $\tau^3$.
Therefore
\begin{equation}\label{eq:fibonacci_upper_est_2}
  \limsup_{n\to\infty}\frac{\PL{f}(n)}{\ln n} \leq \frac{4}{3\ln\tau},
\end{equation}
which gives a better estimate than~(\ref{eq:fibonacci_upper_est_1}).

If we use $\psi^2: a\mapsto ababaabaababaabaababa,b\mapsto ababaabaababa$, which also fixes the
Fibonacci word, we have $p=\varepsilon$, $C=3$, and $\Lambda=\tau^6$. This improves the constant
in estimate~(\ref{eq:fibonacci_upper_est_2}) to $\frac{1}{\ln\tau}$. Making similar considerations
for $\psi^k$, $k\leq 13$, we obtain that $K\leq \frac{2(k+1)}{3 k \ln\tau}$. This makes us conjecture
that
\[
\limsup_{n\to\infty}\frac{\PL{f}(n)}{\ln n}=\frac{2}{3\ln\tau}.
\]

Let us remark that Frid~\cite{frid-numeration-2018} investigated the palindromic length only of
prefixes of the Fibonacci word. She conjectures that the prefix $w^{(k)}$ (of the Fibonacci word)
whose length written in the Zeckendorf numeration system 
is $(|w^{(k)}|)_F=(100)^{2k-1}101$ has $\pall{w^{(k)}}=2k+1$. Should this conjecture be valid,
it would imply that
\[
\limsup_{n\to\infty}\frac{\PL{f}(n)}{\ln n} \geq \frac{1}{3\ln\tau}.
\]

\paragraph{The Thue-Morse word}
Let us consider the Thue-Morse word $\boldsymbol{t}$, i.e., the fixed point of the morphism
$\varphi_{TM}^2: a\mapsto abba, b\mapsto baab$ (cf. Example~\ref{ex:tmmorphisms}). Similarly to the case of
the Fibonacci word we are interested in the constant $K$ where $\limsup_{n\to\infty}\frac{\PL{t}(n)}{\ln n}\leq K$.

Applying Proposition~\ref{prop:upper_est_on_pall} to $\varphi_{TM}^2$, where $\Lambda=4$, $C=2$, $p=\varepsilon$,
we get $K \leq \frac{4}{\ln 4}$.
Further iteration of the procedure in Proposition~\ref{prop:upper_est_on_pall} with $\varphi_{TM}^{2k}$ for $k\leq 13$
show that $K\leq (3+\frac{1}{k})\frac1{\ln 4}$. This leads us to conjecture that
\[
\limsup_{n\to\infty}\frac{\PL{t}(n)}{\ln n} = \frac{3}{\ln 4}.
\]

\section{Words rich in palindromes}\label{sec:rich}

Intuitively, a word containing many palindromic factors should have small palindromic length. Recall
that Droubay et al.~\cite{droubay-justin-pirillo-tcs-255} found out that a finite word $w$ contains at
most $|w|+1$ different palindromic factors. If this bound is attained, the word $w$ is called rich (in palindromes).
An infinite word $\bu$ is called rich if each of its factors is rich. The difference between the upper bound $|w|+1$
and actual number of palindromes in $w$ is called the (palindromic) defect of $w$ and denoted by $D(w)$,
see~\cite{brlek-hamel-nivat-reutenauer-ijfcs-15}.

\begin{ex}
  Word $w_1=abbabaab$ contains 9 palindromic factors: $\varepsilon$, $a$, $b$, $aa$, $bb$, $aba$, $bab$, $abba$, $baab$,
  and thus $w_1$ is rich. On the other hand, the word $w_2=bbabaabb$ contains only 8 palindromic factors:
  $\varepsilon$, $a$, $b$, $aa$, $bb$, $aba$, $bab$, $baab$, and thus $w_2$ is not rich. And so the Thue-Morse word $\boldsymbol{t}$
  is not rich, since $w_2$ is one of its factors. The defect of $w_2$ is $D(w_2)=1$. It follows from the results
  of~\cite{blondin-masse-brlek-garon-labbe-puma-19} that $\sup_{w\in\Lc(\boldsymbol{t})}D(w)=+\infty$.
\end{ex}

Some of the rich words have small palindromic length. For example, the palindromic length of morphic Sturmian words such
as the Fibonacci word $\boldsymbol{f}$ grows at most logarithmically (cf. Theorem~\ref{thm:hlavni}). Nevertheless, quite
surprisingly, richness of a word does not imply that its palindromic length be small. We will demonstrate this fact on finite
words
\[
w^{(k)}=abaabb\cdots a^kb^k,\qquad v^{(k)}=abbaaabbbb\cdots a^{2k-1}b^{2k}.
\]
In all these words
the sequences of runs of $a$ and of runs of $b$ are monotone, and therefore $w^{(k)}$ and $v^{(k)}$ are rich
for all $k\in\N$ by a result of Guo et al.~\cite{guo-shallit-shur-ipl-116}.

\begin{prop}\label{prop:pal_len_of_rich_words_wk_and_uk}
  For every $k\in\N$ we have $\pall{w^{(k)}}=k+1$ and $\pall{v^{(k)}}=k+1$.
\end{prop}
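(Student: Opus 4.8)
The plan is to prove the two matching inequalities $\pall{w^{(k)}}\le k+1$ and $\pall{w^{(k)}}\ge k+1$ separately, and likewise for $v^{(k)}$; since both families have non-decreasing run lengths the two arguments are essentially the same, so I would carry them out in parallel. For the upper bound I would exhibit an explicit factorization into $k+1$ palindromes built around the runs of $a$'s. Writing $w^{(k)}=a^1b^1a^2b^2\cdots a^kb^k$, take the first piece to be the single letter $a$ (the run $a^1$), and for each $j=2,\dots,k$ a palindrome $b^{e_j}a^jb^{e_j}$ centered at the run $a^j$, whose left block of $b$'s is taken from $b^{j-1}$ and whose right block is taken from $b^j$; the remaining suffix of $b^k$ forms the last piece. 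The flank lengths are forced by requiring that each run $b^{j-1}$ be split exactly between its two neighbouring pieces, giving the recurrence $e_2=1$ and $e_{j-1}+e_j=j-1$, solved by $e_j=\lceil(j-1)/2\rceil$, which satisfies $0\le e_j\le j-1$, so every piece is a genuine palindromic factor and the last block $b^{k-e_k}$ is non-empty. This produces exactly $1+(k-1)+1=k+1$ palindromes. The same scheme works for $v^{(k)}=a^1b^2a^3b^4\cdots a^{2k-1}b^{2k}$, now with flanks $f_j=2\lceil(j-1)/2\rceil$.

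The lower bound is where the real work lies, and its engine is a structural restriction on palindromic factors coming from the monotonicity of run lengths. First I would record that a binary palindrome has an odd number of maximal runs: reversal reverses the run sequence, and matching letters forces a symmetric alternating pattern, which is impossible for an even number of runs. The key lemma is then that every palindromic factor of $w^{(k)}$ spans at most three runs. Indeed, all runs of such a factor other than its first and last coincide with full runs of the ambient word, so their length sequence is simultaneously symmetric (palindrome) and non-decreasing (run lengths of $w^{(k)}$), hence constant; but in $w^{(k)}$ at most two consecutive runs share a length, so there is at most one interior run, and combined with the odd-run-count this caps the number of runs at three. For $v^{(k)}$ the run lengths are even strictly increasing, which makes this step immediate.

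With the three-run bound I would finish as follows. A palindromic suffix must end inside the final run $b^k$: a one-run suffix is some power $b^s$, while a hypothetical three-run suffix $b^\ell a^k b^\ell$ cannot occur, since its right flank would have to exhaust $b^k$ whereas its symmetric left flank must fit inside the shorter run $b^{k-1}$. Hence the last piece of any palindromic factorization is a power $b^s$, containing no letter $a$. Now for $j=1,\dots,k$ let $\pi_j$ be the piece containing the last letter of the run $a^j$. If a single piece contained the last letters of two runs $a^j$ and $a^{j'}$ with $j<j'$, it would span $a^j,\dots,a^{j'}$: for $j'\ge j+2$ this exceeds three runs, and for $j'=j+1$ it would be a three-run palindrome $a^\ell b^j a^\ell$ whose right flank $\ell\le j<j+1$ is too short to reach the last letter of $a^{j+1}$ — a contradiction in either case. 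Thus $\pi_1,\dots,\pi_k$ are $k$ distinct pieces, none equal to the final $b$-power, so the factorization uses at least $k+1$ pieces. The main obstacle, and the step requiring the most care, is precisely this three-run lemma together with the $j'=j+1$ boundary case of the distinctness argument, where the monotonicity of run lengths must be combined with the exact flank bound $\ell\le j$; the rest is bookkeeping.
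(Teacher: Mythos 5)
Your proof is correct, and it rests on the same structural fact as the paper's: every palindromic factor of $w^{(k)}$ (resp.\ $v^{(k)}$) is of one of the forms $a^i$, $b^i$, $a^ib^ja^i$, $b^ia^jb^i$, i.e.\ spans at most three runs. The paper simply asserts this classification; you actually prove it (odd number of runs in a binary palindrome, interior runs are full runs of the ambient word, hence a symmetric non-decreasing and therefore constant length sequence, which the run structure of $w^{(k)}$ caps at two terms) --- note only that two consecutive equal-length runs a priori allow \emph{two} interior runs, so the parity argument is genuinely needed to get from four runs down to three, as you indeed use. Your explicit upper-bound factorization $a\cdot b^{e_2}a^2b^{e_2}\cdots b^{e_k}a^kb^{e_k}\cdot b^{k-e_k}$ is exactly the pattern of the paper's third example for $w^{(6)}$, now written for general $k$ with the flanks $e_j$ determined by the recurrence. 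Where you genuinely depart from the paper is the lower bound: the paper argues informally that a minimal factorization uses ``as many as possible'' three-run palindromes and counts at most $k-1$ of them plus two single-run pieces, whereas you set up a clean injection --- the pieces containing the last letters of the $k$ runs of $a$ are pairwise distinct (a single piece covering two such letters would need more than three runs, or be a three-run palindrome $a^\ell b^ja^\ell$ with $\ell\le j$ that cannot reach the end of $a^{j+1}$), and none of them is the final piece, which is forced to be a power of $b$. This buys a fully rigorous count where the paper relies on an ``obviously''; the trade-off is only length.
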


\begin{proof}
  A palindromic factor in $w^{(k)}$ has one of the following forms
  $a^i$, $b^i$, $a^ib^ja^i$, $b^ia^jb^i$ for some $i,j\in\N$. Obviously, a minimal
  factorization will contain as many as possible blocks of the 3rd and 4th type.
  One can use at most  $k-1$ such palindromes (since there are $2k-1$ runs of
  zeroes and ones in $w^{(k)}$, excluding the outer ones) plus at least two palindromes
  of the type $a^i$ or $b^i$ to cover whole $w^{(k)}$. Examples of such minimal
  palindromic factorizations follows.
  \begin{align*}
    w^{(6)} & = aba\cdot abba\cdot aabbbaa\cdot aabbbbaa\cdot aaabbbbbaaa\cdot aaa\cdot bbbbbb = \\
           & = aba\cdot abba\cdot aa \cdot bbbaaaabbb\cdot baaaaab\cdot bbbbaaaaaabbbb\cdot bb = \\
           & = a\cdot baab \cdot baaab \cdot bbaaaabb\cdot bbaaaaabb\cdot bbbaaaaaabbb\cdot bbb.
  \end{align*}
  The same consideration holds for $v^{(k)}$.
\end{proof}

With the use of the above proposition and a result on rich words derived in~\cite{rukavicka-dlt-2017}, we can summarize
the following information about palindromic length of finite rich words.

\begin{coro}\label{c:rich}
  There are constants $c>0$, $d>1$ such that
  \begin{enumerate}
  \item
    $\pall{w}\geq c\sqrt{|w|}$ for infinitely many finite rich words $w$,
  \item
    $\pall{w}\leq d\frac{|w|}{\ln|w|}$ for every rich word $w$.
  \end{enumerate}
\end{coro}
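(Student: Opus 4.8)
The statement splits into a lower bound realized by an explicit family and an upper bound valid for all rich words. My plan is to obtain (i) directly from Proposition~\ref{prop:pal_len_of_rich_words_wk_and_uk} and to obtain (ii) from the result of Rukavi\v{c}ka~\cite{rukavicka-dlt-2017}, so that the proof reduces to combining these two inputs and calibrating the constants $c$ and $d$.

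For (i) I would work with the family $w^{(k)}=a^1b^1a^2b^2\cdots a^kb^k$. Its runs of $a$ and $b$ have lengths $1,1,2,2,\ldots,k,k$, so its length is $|w^{(k)}|=2\sum_{j=1}^{k}j=k(k+1)$, and Proposition~\ref{prop:pal_len_of_rich_words_wk_and_uk} gives $\pall{w^{(k)}}=k+1$. The elementary inequality $(k+1)^2=(k+1)(k+1)\ge (k+1)k=|w^{(k)}|$ then yields $\pall{w^{(k)}}=k+1\ge\sqrt{|w^{(k)}|}$, so the constant $c=1$ works. Since each $w^{(k)}$ is rich by Guo et al.~\cite{guo-shallit-shur-ipl-116} and the lengths $k(k+1)$ are strictly increasing, this produces infinitely many rich words meeting the bound. (The companion family $v^{(k)}$ gives the same square-root order with a smaller constant, so $w^{(k)}$ is the efficient choice.)

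For (ii) the plan is to invoke the upper estimate established in~\cite{rukavicka-dlt-2017}, which asserts precisely that the palindromic length of every rich word $w$ satisfies $\pall{w}=O(|w|/\ln|w|)$. After checking that the notion of richness there agrees with the one used here and that the base of the logarithm affects only the multiplicative constant, one chooses $d>1$ large enough to absorb lower-order terms and to cover the finitely many short words for which the bound would otherwise be vacuous. The genuine content of (ii) therefore resides in the cited reference, and here it is a matter of importing it and bookkeeping the constant.

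Accordingly, I expect the only real obstacle to lie on the upper-bound side: should~\cite{rukavicka-dlt-2017} provide only a weaker combinatorial fact rather than the estimate in the stated form, the fallback would be a greedy palindromic factorization of $w$ combined with a counting argument exploiting that a rich word of length $n$ has exactly $n+1$ distinct palindromic factors, used to bound how many short palindromic prefixes can occur consecutively. Under the natural reading that the cited bound is available directly, however, (ii) is a citation plus constant bookkeeping, and the only substantive computation in the whole corollary is the clean length evaluation $|w^{(k)}|=k(k+1)$ underlying (i).
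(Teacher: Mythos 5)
Your proposal matches the paper's proof essentially verbatim: part (i) follows from $|w^{(k)}|=k(k+1)$ and $\pall{w^{(k)}}=k+1\geq\sqrt{k(k+1)}$ via Proposition~\ref{prop:pal_len_of_rich_words_wk_and_uk}, and part (ii) is exactly the statement of Theorem~3 in~\cite{rukavicka-dlt-2017}, cited directly. Your extra hedging about a fallback greedy argument for (ii) is unnecessary; the cited theorem gives the bound in the stated form.
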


\begin{proof}
  i) By definition, the length of the word $w^{(k)}$ is $|w^{(k)}|=k(k+1)$. Using
  Proposition~\ref{prop:pal_len_of_rich_words_wk_and_uk}, we have $\pall{w^{(k)}}=k+1 > \sqrt{k(k+1)}$.
  Similarly, we can use the words $v^{(k)}$.\\
  ii) This is the statement of Theorem 3 in~\cite{rukavicka-dlt-2017}.
\end{proof}

The finite words $w^{(k)}$ ($v^{(k)}$ resp.), $k\in\N$, allow one to define infinite rich words.

\begin{prop}\label{p:rich}
  There exists an infinite word $\bu$ rich in palindromes for which
  \[
  \sqrt{n}-3 \leq \PL{u}(n) \leq \sqrt{n}+4 \qquad \text{for every $n\in\N$}.
  \]
\end{prop}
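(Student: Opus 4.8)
The plan is to construct the infinite word $\bu$ by concatenating the finite rich words $w^{(k)}$ in increasing order, namely $\bu = w^{(1)}w^{(2)}w^{(3)}\cdots = ab\cdot aabb\cdot aaabbb\cdots$, and then to verify both the richness of $\bu$ and the claimed two-sided bound on $\PL{u}(n)$. First I would establish that $\bu$ is rich. The key observation from Guo, Shallit and Shur~\cite{guo-shallit-shur-ipl-116} is that a binary word is rich whenever the sequences of lengths of its maximal runs of $a$'s and of $b$'s are each monotone. In $\bu$ the run of $a$'s have lengths $1,2,3,\ldots$ and the runs of $b$'s have lengths $1,2,3,\ldots$ as well, both strictly increasing, so every factor of $\bu$ sits inside a prefix whose run-length sequences are monotone; hence every factor is rich and $\bu$ is rich in palindromes.

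Next I would turn to the palindromic length. The main idea is that $\PL{u}(n)$ is controlled by the factorization of a length-$n$ factor into runs, since from the proof of Proposition~\ref{prop:pal_len_of_rich_words_wk_and_uk} the palindromic length of a word of the shape $a^{i_1}b^{j_1}a^{i_2}\cdots$ is essentially half the number of runs (one can pair adjacent runs into palindromes of the type $a^ib^ja^i$ or $b^ia^jb^i$, up to a small boundary correction). So I would first compute, for a factor starting at a prescribed position, how many complete runs of $\bu$ it meets. Since the cumulative length after the block $w^{(1)}\cdots w^{(k)}$ is $\sum_{j=1}^k j(j+1) = \tfrac{1}{3}k(k+1)(k+2)$, a factor of length $n$ spans on the order of the runs whose total length is $n$; because the run lengths themselves grow linearly, the number of runs inside a window of length $n$ is of order $\sqrt{n}$. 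I would make this precise by bounding the number of runs a length-$n$ window can contain between $\sqrt{n}-O(1)$ and $\sqrt{n}+O(1)$, and then translate run-count into palindromic length using the run-pairing factorization for the upper bound and a counting lower bound for the number of palindromes needed.

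The lower bound is the more delicate half, and I expect it to be the main obstacle. For the upper bound $\PL{u}(n)\le\sqrt n+4$ it suffices to exhibit an explicit factorization: cover the factor by pairing consecutive runs into palindromes $a^ib^ja^i$ and $b^ia^jb^i$, absorbing the two truncated boundary runs into at most a constant number of extra single-block palindromes $a^i$ or $b^i$; counting these against the run bound gives the stated estimate. For the lower bound $\PL{u}(n)\ge\sqrt n-3$, I would argue that any palindrome occurring as a factor of $\bu$ can contain at most a bounded number of run-boundaries of a specific kind — because a palindromic factor of the form $a^ib^ja^i$ or $b^ia^jb^i$ straddles exactly three runs and cannot be longer, as the strictly increasing run lengths prevent longer palindromes from fitting (a palindrome cannot have two distinct interior runs of different lengths in a mirror-symmetric pattern). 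Consequently each palindrome in a factorization consumes at most a bounded number of runs, so a factor meeting $m$ runs needs at least $m/2 - O(1)$ palindromes; combining with $m\ge\sqrt n - O(1)$ yields the claim. The care needed here is in the bookkeeping of partial runs at the two ends of the window and in choosing the additive constants $-3$ and $+4$ so that they hold uniformly for every $n$, including the small cases; I would verify the boundary accounting explicitly rather than hiding it in an asymptotic statement.
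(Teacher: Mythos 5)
Your word is the right one, but note a slip at the outset: $w^{(1)}w^{(2)}w^{(3)}\cdots$ is \emph{not} the word $ab\cdot aabb\cdot aaabbb\cdots$ that you display, since $w^{(2)}=abaabb$ already contains $w^{(1)}$ as a prefix; the intended word is the one having every $w^{(k)}$ as a prefix, i.e.\ $a b\, a^2 b^2\, a^3 b^3\cdots$, which is the paper's choice, and your richness argument via monotone run lengths is the paper's as well. Where you diverge is in how the bounds are obtained. The paper does not re-analyse arbitrary windows by run counting: for a prefix $u$ with $k(k+1)<|u|\le(k+1)(k+2)$ it writes $u=w^{(k)}z$ and $w^{(k+1)}=uv$ with $\pall{z},\pall{v}\le 2$, applies Saarela's inequality $\pall{x}\le\pall{y}+\pall{xy}$ twice to get $k-1\le\pall{u}\le k+4$ directly from Proposition~\ref{prop:pal_len_of_rich_words_wk_and_uk}, and then reduces general factors to prefixes. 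This reuses the combinatorial work already done for $w^{(k)}$ instead of redoing it inside an arbitrary window.

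The genuine gap is in your lower bound. First, the run count is off by a factor of two: a prefix of length $n\approx k^2$ meets $2k\approx 2\sqrt{n}$ runs, not $\sqrt{n}+O(1)$, so even granting your claim that $m$ runs force at least $m/2-O(1)$ palindromes, your stated $m\approx\sqrt{n}$ would only yield $\tfrac12\sqrt{n}-O(1)$. Second, and more seriously, the claim ``each palindrome consumes a bounded number of runs, hence $K\ge m/2-O(1)$'' does not follow from the observation you actually justify (that every palindromic factor of $\bu$ spans at most three runs). Telescoping the first/last run indices over a factorization into $K$ palindromes, where each piece advances by at most $2$ runs internally and by at most $1$ run to the next piece, gives only $m-1\le 2K+(K-1)$, i.e.\ $K\ge m/3\approx\tfrac{2}{3}\sqrt{n}$, which falls short of $\sqrt{n}-3$. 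To reach $K\ge m/2+O(1)$ you need the finer structural facts that a three-run palindrome $a^ib^ja^i$ must have a \emph{complete} run of $\bu$ as its middle, that these middles are interior runs spaced at least two apart, and that the strict monotonicity of run lengths forces the runs skipped between consecutive three-run pieces to be paid for by single-run palindromes --- precisely the content of the paper's proof of Proposition~\ref{prop:pal_len_of_rich_words_wk_and_uk}, which you would have to carry out uniformly for non-prefix windows as well. Either supply that refinement, or, more economically, prove the exact value only for $w^{(k)}$ and transfer it to all prefixes with Saarela's lemma as the paper does; in both cases you still owe the reduction from arbitrary factors to prefixes for the upper bound.
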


\begin{proof}
  It suffices to define the infinite word $\bu$ as the word having the prefix $w^{(k)}$ for every $k\in\N$.
  
  Let $u$ be a prefix of $\bu$ of length $n$, then we show that
  \begin{equation}\label{eq:pref_rich_u}
    \sqrt{n}-3\leq\pall{u}\leq\sqrt{n}+4.
  \end{equation}
  Let $k$ be such that
  \begin{equation}\label{eq:pref_rich_uu}
    k(k+1)<n\leq(k+1)(k+2).
  \end{equation}
  Then
  \begin{itemize}
  \item
    since $u$ has prefix $w^{(k)}$, we have $u=w^{(k)}z$, where $z=a^l$ or $z=a^{k+1}b^l$ for some $l\leq k+1$.
    Thus $\pall{z}\leq 2$,
  \item
    $w^{(k+1)}$ has prefix $u$ and thus $w^{(k+1)}=uv$, where (analogously to the previous case) $\pall{v}\leq 2$.
  \end{itemize}
  Lemma 6 from~\cite{saarela-lncs-10432} states that $\pall{x}\leq\pall{y}+\pall{xy}$. Using this lemma for
  $x=w^{(k)}$ and $y=z$ and then for $x=u$ and $y=v$ we get
  \[
  k-1 = \pall{w^{(k)}}-2 \leq \pall{u} \leq \pall{w^{(k+1)}}+2 = k+4. 
  \]
  Since~(\ref{eq:pref_rich_uu}) implies $k<\sqrt{n}<k+2$ the estimate~(\ref{eq:pref_rich_u}) follows.

  Analogously to the proof of Proposition~\ref{prop:pal_len_of_rich_words_wk_and_uk}, the maximum of the set
  $\{\pall{u}:|u|=n,u\in\mathcal{L}(\bu)\}$ is reached on the prefixes of $\bu$. Therefore
  \[
  \sqrt{n}-3 \leq \PL{u}(n) \leq \sqrt{n}+4. \qedhere
  \]
\end{proof}

\section{Combined pal-antipal length}\label{sec:antipal}

Let us reconsider the Thue-Morse word. Besides infinitude of palindromes, it contains also infinitely many the so-called 
antipalindromes~\cite{blondin-masse-brlek-garon-labbe-puma-19}. These words have been considered in a wider context under 
the name $f$-pseudo-palindromes (or $f$-palindromes) already 
in~\cite{anne-zamboni-zorca-words-2005,deluca-deluca-tcs-362,halava-harju-karki-zamboni-tucs-839}.
We use the name antipalindrome in accordance with~\cite{guo-shallit-shur-unpbl-2015}.

A finite word $w=w_1\cdots w_n$ over a binary alphabet
$\{a,b\}$ is an antipalindrome, if $w=E(w_n)E(w_{n-1})\cdots E(w_1)$, where $E$ exchanges the letters, $E(a)=b$, $E(b)=a$.
Obviously, an antipalindrome is always of even length. A finite word thus need not to be factorizable into only
antipalindromes.

An extension of the question on palindromic length could be on the factorization of a given finite word into the smallest
possible number of factors which are either palindromes of antipalindromes. For that purpose, we have adapted the simple
quadratic algorithm for minimal palindromic factorization given in~\cite{fici-gagie-karkkainen-kempa-jda-28}. We have
computed the palindromic and combined pal-antipal length for the prefixes of the Thue-Morse word $\boldsymbol{t}$
of length up to $10^6$, see graph of $\PL{t}(n)/\ln n$ in Figure~\ref{f}.

\begin{figure}[!ht]
  \centering
  \includegraphics[width=0.47\textwidth]{./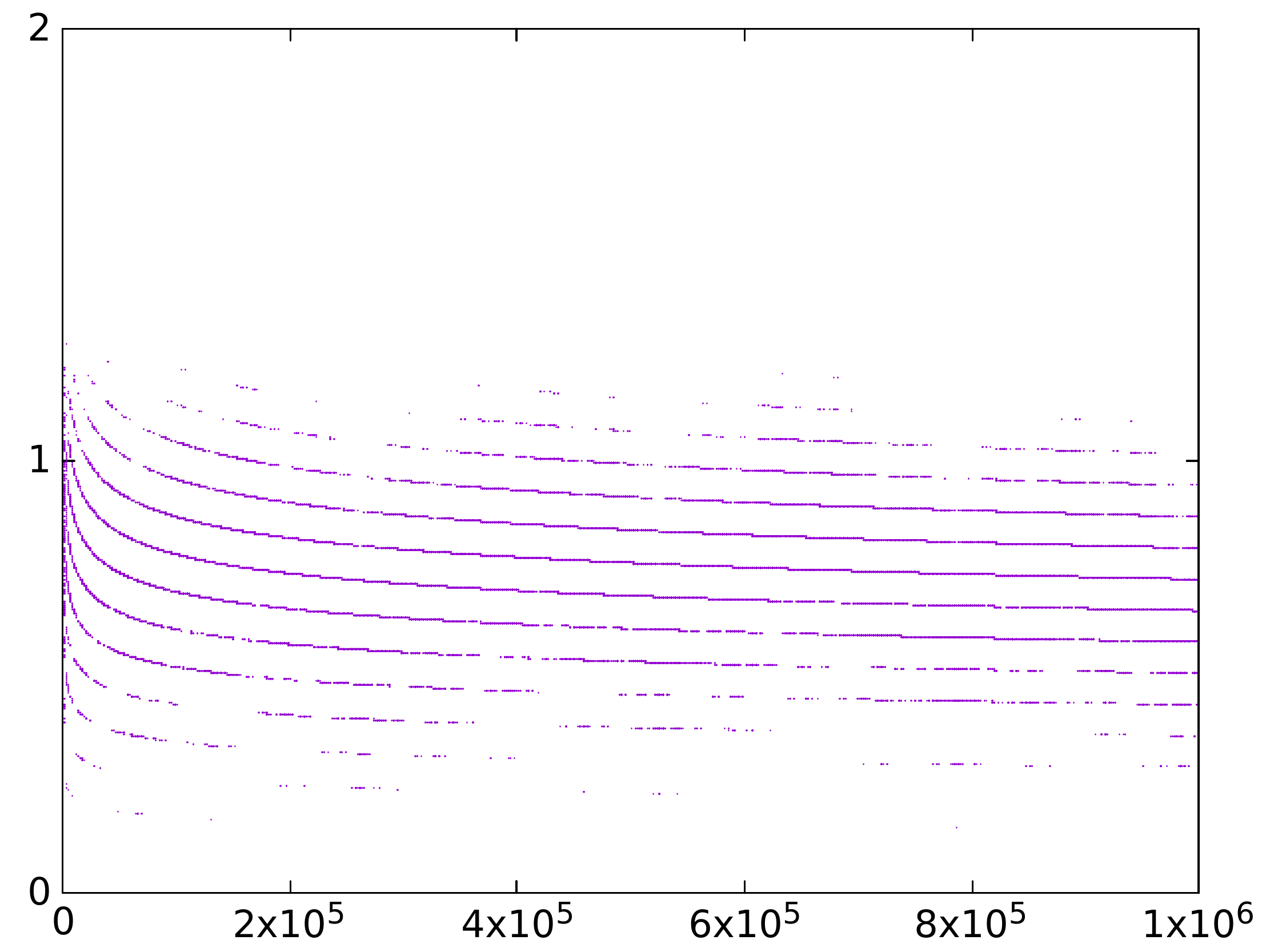}
  \quad
  \includegraphics[width=0.47\textwidth]{./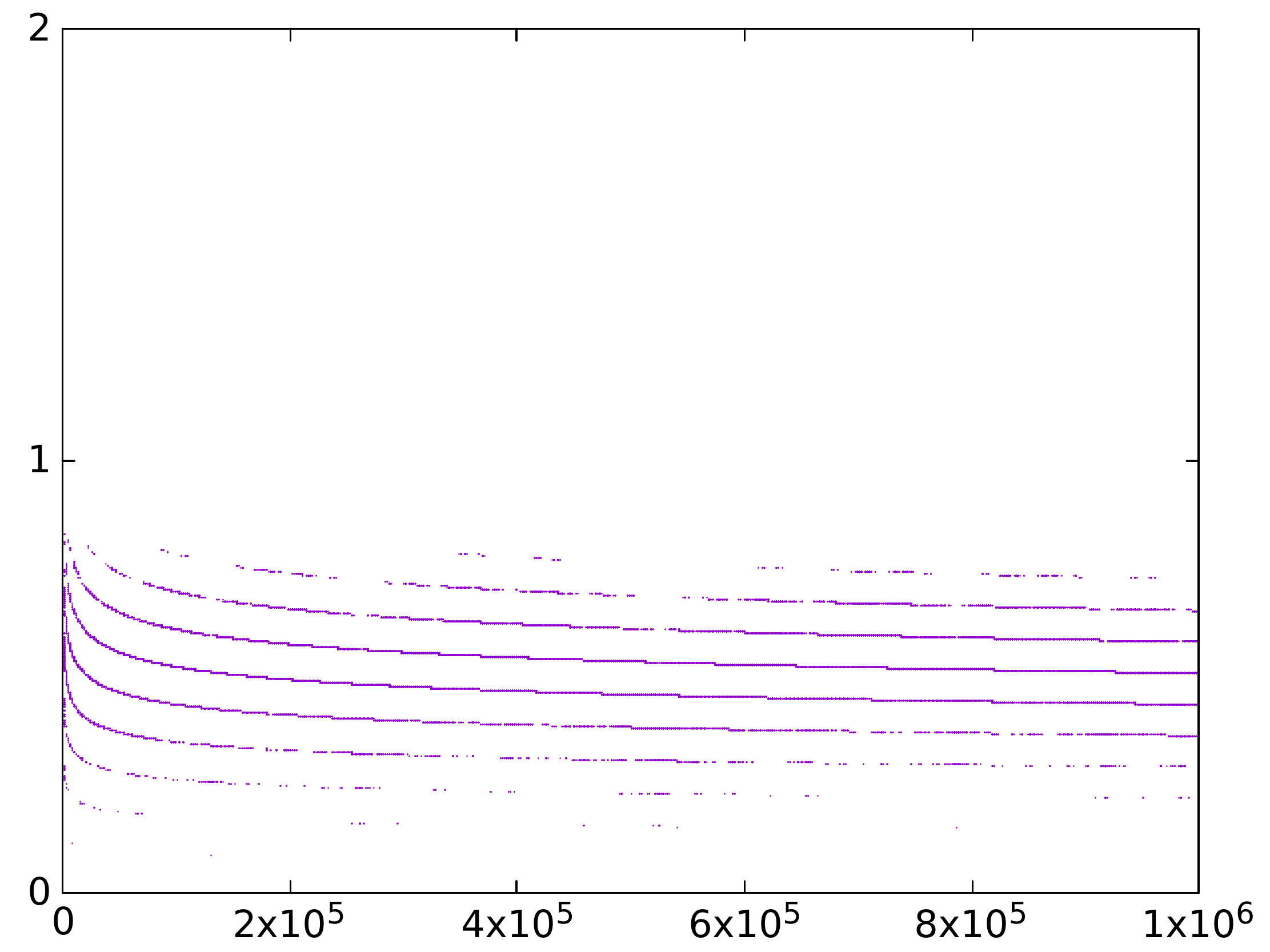}
  \caption{On the left, $\PL{t}(n)/\ln n$ for $1\leq n\leq 10^6$ is displayed.
    The same for the combined pal-antipal length on the right.}\label{f}
\end{figure}

In Tables~\ref{t:double} and~\ref{t:same} we include some of the results of our computations comparing the palindromic length $\pall{w}$ and 
the combined pal-antipal length $\pallc{w}$ of the prefixes $|w|$ of the Thue-Morse word $t$.

\begin{table}[!ht]
\centering
\begin{tabular}[t]{rcc}
    \toprule
    $|w|$ & $\pall{w}$ & $\pallc{w}$ \\ \midrule[\heavyrulewidth]
    2 & 2 & 1  \\ \midrule
    10 & 4 & 2  \\ \midrule
    118 & 6 & 3  \\ \midrule
    630 & 8 & 4  \\ \midrule
    7542 & 10 & 5  \\ 
    \bottomrule
\end{tabular}
\hspace{2em}
\begin{tabular}[t]{rcc}
    \toprule
    $|w|$ & $\pall{w}$ & $\pallc{w}$ \\ \midrule[\heavyrulewidth]
    40310 & 12 & 6  \\ \midrule
    482678 & 14 & 7  \\ \midrule
    2579830 & 16 & 8 \\ \midrule
    30891382 & 18 & 9 \\ 
    \bottomrule
\end{tabular}
\caption{Combined pal-antipal length $\pallc{w}$ of prefixes $w$ of the Thue-Morse word $\boldsymbol{t}$ can be up to twice smaller than their 
palindromic length $\pall{w}$. For $1\leq n\leq 9$, we display the first occurrences of the case when $\pall{w}=2\pallc{w}=2n$.}\label{t:double} 
\end{table}

Our computations suggest that including both palindromes and antipalindromes into the factorization of Thue-Morse word,
one can reduce the number of factors by half (and not more), see Table~\ref{t:double}.
On the other hand, we suppose that there exist prefixes of arbitrary palindromic length whose combined pal-antipal length
as big as the palindromic one, see~Table~\ref{t:same}. Other research could be done in this direction.

\begin{table}[!ht]
\centering
  \begin{tabular}{rcc}
    \toprule
    $|w|$ & $\pall{w}$ & $\pallc{w}$   \\ \midrule[\heavyrulewidth]
    1 & 1 & 1  \\ \midrule
    3 & 2 & 2  \\ \midrule
    11 & 3 & 3 \\ \midrule
    43 & 4 & 4 \\ \midrule
    171 & 5 & 5 \\ \midrule
    683 & 6 & 6 \\ \midrule
    2731 & 7 & 7 \\ \bottomrule
  \end{tabular}
  \hspace{2em}
  \begin{tabular}{rcc}
    \toprule
    $|w|$ & $\pall{w}$ & $\pallc{w}$   \\ \midrule[\heavyrulewidth]
    10923 & 8 & 8  \\ \midrule
    43691 & 9 & 9  \\ \midrule
    174763 & 10 & 10 \\ \midrule
    699051 & 11 & 11  \\ \midrule
    2796203 & 12 & 12  \\ \midrule
    11184811 & 13 & 13  \\ \midrule
    44739243 & 14 & 14 \\ \bottomrule
   \end{tabular}
  \caption{The prefixes of the infinite word $\boldsymbol{t}$ on which considering antipalindromes does not improve the palindromic length.
  For each $1\leq n\leq 14$, we display the first occurrences of the case when $\pall{w}=\pallc{w}=n$ happens.}\label{t:same} 
\end{table}

\section{Open problems}

The palindromic length of finite and infinite words has been introduced in 2013~\cite{frid-puzynina-zamboni-aam-50}. Since then,
several groups of authors focused on the design of fast algorithms for computing the minimal palindromic factorization, see
e.g.~\cite{fici-gagie-karkkainen-kempa-jda-28,rubinchik-shur-lncs-9538,borozdin-kosolobov-rubinchik-shur-lipi-78}. On the other hand,
an analytic study of the palindromic length is still in its beginnings. Let us formulate several open questions which we consider of interest.

\begin{enumerate}
\item
  When studying palindromic length of the Fibonacci and Thue-Morse word, we have conveniently considered
  a power of the morphism that could be conjugated to a morphism in which the image of every letter is a palindrome, i.e.,
  $a\mapsto q_a$ for every $a\in A$. According to our knowledge, question on determining for which morphisms
  such a power exists, has not been considered yet.
\item
  In the study of the growth of $\PL{u}(n)$ we provide a method of finding an upper bound on the constant $K$, in the estimate
  $\PL{u}(n)\leq K\ln n$, which is valid for any fixed point $\bu$ of any morphism in class $\Pc$. According to our knowledge, no methods
  for giving a lower bound on $\PL{u}(n)$ have been mentioned in the literature. So far, only Frid~\cite{frid-numeration-2018} has
  focused on finding a lower bound on the palindromic length. Her study is specific for the Fibonacci word $\boldsymbol{f}$. She
  states a conjecture describing the prefixes $w$ of $\boldsymbol{f}$ having palindromic length $\pall{w}$ strictly bigger than all
  the shorter prefixes of $\boldsymbol{f}$.
\item
  The validity of the conjectured lower bound of Frid~\cite{frid-numeration-2018} would imply that
  $\limsup_{n\to\infty}\PL{f}(n)/\ln n\geq 1/(3\ln\tau)$. Our computations (cf.\ Section~\ref{sec:FibThu}) suggest that $\limsup$ should have
  a bigger value. This is probably caused by the fact that Frid only considers the palindromic length of prefixes of the Fibonacci word.
  It may be the case that bigger palindromic length is achieved on factors that are not prefixes of $\boldsymbol{f}$. We do not have
  candidates for such factors. It should be mentioned that Saarela~\cite{saarela-lncs-10432} shows equivalence between the unboundedness
  of the palindromic length when taken over the factors and considering only the prefixes. This, however, does not mean that the growth
  of the function dependingly on $n$ should be equal.
\item
  In Proposition~\ref{p:rich}, we give an infinite rich word whose palindromic length grows with $n$ at least as $\sqrt{n}$. The infinite
  word is however not uniformly recurrent. All the other considered classes of palindromic uniformly recurrent words have palindromic
  length bounded by $K\ln n$. Does there exist a uniformly recurrent infinite word $\bu$ such that $\PL{u}(n)\geq c\sqrt{n}$?
\item
  HKS conjecture was formulated in view of characterization of morphisms providing fixed points with infinitely many palindromes.
  It is not obvious which morphisms generate fixed points that besides infinitely many palindromes contain also arbitrarily long
  antipalindromes, as it is the case of the Thue-Morse morphism.
\end{enumerate}


\section*{Acknowledgments}

\noindent
This work was supported by the project CZ.02.1.01/0.0/0.0/16\_019/0000778
from European Regional Development Fund. We also acknowledge financial support of the Grant Agency of the
Czech Technical University in Prague, grant No.\ SGS14/205/OHK4/3T/14.



\end{document}